\documentclass[11pt]{article}

\usepackage[margin=1in]{geometry}
\usepackage{amsmath,amssymb,amsthm,mathtools}
\usepackage{enumitem}
\usepackage[hidelinks]{hyperref}

\newtheorem{theorem}{Theorem}
\newtheorem{lemma}{Lemma}

\newtheorem{remark}{Remark}
\newtheorem{example}{Example}
\newtheorem{conjecture}{Conjecture}

\newcommand{\cB}{\mathcal B}
\newcommand{\cF}{\mathcal F}
\newcommand{\cH}{\mathcal H}
\newcommand{\cL}{\mathcal L}
\newcommand{\cW}{\mathcal W}
\newcommand{\one}{\mathbf 1}
\newcommand{\st}{\ast}

\title{On balanced subfamilies of maximum complement-free families in the middle layer of the Boolean lattice}
\author{Sa\'ul A. Blanco\footnote{Department of Computer Science, Indiana University, Bloomington. \\ Email: \texttt{sblancor@iu.edu}}}
\date{June 16, 2026}

\begin{document}
\maketitle

\begin{abstract}
We study balanced subfamilies of the middle layer $\binom{[2n]}{n}$ of the Boolean lattice $2^{[2n]}$. A family $\mathcal{F}\subseteq\binom{[2n]}{n}$ is said to be balanced if every element in $[2n]$ appears in the same number of members of $\mathcal{F}$. A balanced subfamily of size 2 is exactly a complementary pair $\{A,[2n]\setminus A\}$, and therefore a family with no balanced subfamily of size $2$ has at most $\frac{1}{2}\binom{2n}{n}$ members. We show that for every $k\geq 1$ and all sufficiently large $n$, this maximum size is compatible with delaying the smallest size of a balanced subfamily until $2k+2$. More precisely, there exists a family $\mathcal{F}\subseteq\binom{[2n]}{n}$ of size $\frac{1}{2}\binom{2n}{n}$ with no balanced subfamilies of sizes $2,4,\ldots,2k$, but with a balanced subfamily of size $2k+2$. The proof is constructive and is obtained by lifting Taylor--Zwicker trade-robust magic-square games to self-dual selectors in the middle layer. This proves a recent conjecture of Moss and Pedersen. 
\end{abstract}

\section{Preliminaries}

Given a finite set $X$, the set of subsets of $X$ of cardinality $r$ is denoted by $\binom{X}{r}$. If one considers the lattice $(2^X,\subseteq)$ of subsets of $X$ ordered by inclusion, then the sets in $\binom{X}{r}$ form the $r$-th layer of this lattice. If $|X|=2r$, then $\binom{X}{r}$ is referred to as the \emph{middle layer}. For a positive integer $m$, let $[m]=\{1,2,\dots,m\}$.  A family
\[
  \cB\subseteq \binom{[2n]}{n}
\]
is called \emph{balanced} if every element in $[2n]$ belongs to the same
number of members of $\cB$.  A balanced family of size $2$ is exactly a complementary pair $\{A,A^c\}$, where $A^c$ denotes the complement of $A$. Therefore any family in $\binom{[2n]}{n}$ with no balanced subfamily of size
$2$ has size at most $\frac{1}{2}\binom{2n}{n}$. If $|\mathcal B|=s$, then the total number of incidences is $sn$. Since there are $2n$ elements in total, the common number of occurrences would have to be $s/2$. Hence $s$ must be even. Thus, if $s=2r$, then every element appears exactly $r$ times.

Let $\one_A$ denote the \emph{characteristic vector} of
$A\subseteq [2n]$. Then, a sequence
\[
  A_1,\dots,A_{2r}\in \binom{[2n]}{n}
\]
is balanced if and only if
\[
  \one_{A_1}+\cdots+\one_{A_{2r}}=(r,r,\dots,r).
\]

In a recent pre-print, Moss and Pedersen~\cite{MossPedersen} conjectured the following statement.

\begin{conjecture}\label{con:Moss}
Fix an integer $k\geq 1$. For all sufficiently large $n$, there exists a family $\cF\subseteq \binom{[2n]}{n}$ such that:

\begin{enumerate}
    \item[(i)] $\cF$ has maximum possible size subject to containing no complementary pair; that is,
    \[
    |\cF|=\frac{1}{2}\binom{2n}{n}.
    \]
    \item[(ii)] $\cF$ contains no balanced subfamily of size $2,4,\ldots,2k$.
    \item[(iii)] $\cF$ contains a balanced subfamily of size $2k+2$.
\end{enumerate}
\end{conjecture}

In this note, we prove Conjecture~\ref{con:Moss} is true. 

\subsection{Simple games and trades}

A \emph{simple game} is a pair $G=(P,\cW)$, where $P$ is a finite set of players and $\cW\subseteq 2^P$ is the set of winning coalitions.  The losing coalitions are the complement $\cL=2^P\setminus \cW$. $G$ is said to be
\emph{monotone} if $X\in\mathcal W$ and $X\subseteq Y\subseteq P$ implies that $Y\in\mathcal W$.\footnote{Taylor and Zwicker~\cite{TaylorZwicker1995} remark that in much of game theory, the definition of simple game includes this monotonicity condition. We follow their approach and do not require monotonicity in the definition.}

A $j$-\emph{trade} from winning coalitions to losing coalitions is a pair of
sequences
\[
  X_1,\dots,X_j\in \cW,
  \qquad\text{ and }\qquad
  Y_1,\dots,Y_j\in \cL,
\]
such that every player occurs the same number of times on both sides.  In
characteristic-vector notation, this means
\[
  \one_{X_1}+\cdots+\one_{X_j}
  =
  \one_{Y_1}+\cdots+\one_{Y_j}.
\]
The game is called $j$-\emph{trade robust} if no such $j$-trade exists.

We use the following theorem of Taylor and Zwicker.

\begin{theorem}[Taylor and Zwicker~\cite{TaylorZwicker1995}, Theorem 3.4]\label{thm:one}
For every integer $q\geq 3$, there exists a simple game $G_q$ on
$q^2$ players such that $G_q$ is $j$-trade robust if and only if
$q\nmid j$.
\end{theorem}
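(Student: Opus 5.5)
The plan is a Taylor--Zwicker ``magic square'' construction. I have not checked that the final step closes; the invariant it needs is named below. Place the $q^2$ players in the cells of a $q\times q$ grid and give them weights from a $q\times q$ magic square $M$ with distinct entries. Every row and every column then has the same weight $S$ and the same size $q$. Let $w(X)=\sum_{p\in X}M(p)$. Declare $X$ winning if either $(w(X),|X|)$ is lexicographically larger than $(S,q)$, or $w(X)=S$, $|X|=q$ and $X$ is a row. Everything else is losing, so in particular every column is losing.

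\emph{Trades exist when $q\mid j$.} The $q$ rows and the $q$ columns each sum to the all-ones vector, so they form a $q$-trade. Concatenating $j/q$ copies of it gives a $j$-trade for every multiple $j$ of $q$.

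\emph{Robustness when $q\nmid j$.} Suppose $X_1,\dots,X_j$ is winning, $Y_1,\dots,Y_j$ is losing, and $\sum\one_{X_i}=\sum\one_{Y_i}$. Applying the linear functionals $w$ and $|\cdot|$ gives $\sum_i w(X_i)=\sum_i w(Y_i)$ and $\sum_i|X_i|=\sum_i|Y_i|$. Every winning set has $(w,|\cdot|)\ge(S,q)$ lexicographically, and every losing set has $(w,|\cdot|)\le(S,q)$ unless it is a row. A lexicographic averaging argument then forces $w(X_i)=w(Y_i)=S$ and $|X_i|=|Y_i|=q$ for all $i$. Hence every $X_i$ is a row, and every $Y_i$ is a weight-$S$, size-$q$ set that is not a row. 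It remains to show that if $j$ such non-row sets sum to a vector that is constant on each row, with multiplicities $m_1,\dots,m_q$ summing to $j$, then $q\mid j$.

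\emph{Main obstacle.} This last step needs an extra invariant, and the naive game above does not supply it: one can rebalance pieces of rows into non-row sets of weight $S$. My first attempt would be to choose $M$ as a sum of two orthogonal Latin squares, $M(p)=q\,a(p)+b(p)+1$. I would then tighten the tie-breaking rule, requiring winning weight-$S$ sets to have equal sums in both coordinates $a$ and $b$, and to take each value of $b$ exactly once. A surviving losing set must then be a transversal-type set. Summing the $b$-coordinate over the trade, on the row side and on the losing side, should give a congruence $j\cdot\binom{q}{2}\equiv j\cdot c \pmod q$ with $c\not\equiv\binom q2$. Such a congruence forces $q\mid j$.

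If that choice of invariant fails, I would fall back on Taylor--Zwicker's original definition, whose point is exactly this property. The non-monotonicity allowed in the footnote is what lets the winning sets be exactly the rows within the tie class.
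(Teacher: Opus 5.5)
\textbf{There is a genuine gap.} Your reduction is correct as far as it goes. The row/column trade handles $q\mid j$, and the lexicographic averaging argument does force every $X_i$ to be a row and every $Y_i$ to be a non-row set of weight $S$ and size $q$. But the step you leave open is the whole content of the theorem, and your Latin-square repair does not close it.

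\emph{The repair fails.} The congruence you hope for is vacuous: a set taking each $b$-value exactly once has $b$-sum exactly $\binom q2$, so $c=\binom q2$. Worse, take $M=q\,a+b+1$. Orthogonality identifies cells with edges of $K_{q,q}$ between $a$-values and $b$-values. Every perfect matching then has weight $S$, and the rows are just $q$ of the $q!$ matchings.
\begin{itemize}
\item If for two rows the $2$-regular graph $R_r\cup R_s$ has more than one cycle, swap along one cycle. This gives $\one_{R_r}+\one_{R_s}=\one_\sigma+\one_\tau$ with $\sigma,\tau$ non-rows, a $2$-trade.
\item If all pairwise unions of rows are Hamiltonian cycles, take $e\in R_3$. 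Its endpoints lie at odd distance on the cycle $R_1\cup R_2$. Match the two leftover paths by alternate edges and add $e$; this gives a perfect matching $\sigma$ of $R_1\cup R_2\cup R_3$ other than the three rows. Each pairwise union is a single cycle and so has only two perfect matchings, hence $\sigma$ meets all three rows. Splitting the $2$-regular remainder into $\tau_1,\tau_2$ gives the $3$-trade $\one_{R_1}+\one_{R_2}+\one_{R_3}=\one_\sigma+\one_{\tau_1}+\one_{\tau_2}$.
\end{itemize}
So for $q\ge 4$ this weighting, with rows the only winning threshold sets, always admits a $2$-trade or a $3$-trade. Both are forbidden, since $q\nmid 2$ and $q\nmid 3$. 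The offending sets are themselves transversal-type, so restricting the losing pool to such sets does not help. Adding requirements on the winning tie sets only enlarges the pool of losing $Y_i$.

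\emph{The missing idea.} The paper takes this theorem as a black box from Taylor--Zwicker and describes $G_q$ via a \emph{strongly rigid} magic square. These are cell weights whose only subsets of weight exactly the magic constant $p$ are the $q$ rows and the $q$ columns. Let $X$ win iff $w(X)>p$ or $X$ is a row; no size coordinate is needed. Your averaging argument then makes each $X_i$ a row and each $Y_i$ a column. If row $r$ occurs $m_r$ times and column $c$ occurs $n_c$ times, the count at cell $(r,c)$ gives $m_r=n_c$ for all $r,c$. So all multiplicities equal some $m$, and $j=qm$.

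\emph{Existence for every $q\ge 2$.} Let $V$ be the space of real $q\times q$ matrices with all line sums equal. For a cell set $T$, the functional $x\mapsto\sum_T x-\sum_{R_1}x$ vanishes identically on $V$ iff $\one_T(r,c)=\alpha_r+\beta_c$ with $\sum_r\alpha_r+\sum_c\beta_c=1$. A $0/1$ matrix of that form is a union of rows or a union of columns. In every such representation the coefficient sum equals the number of rows (or columns) used, so the condition forces a single row or column. Hence every other $T$ cuts out a proper hyperplane of $V$. Choose a rational point of $V$ near the all-ones matrix that avoids these finitely many hyperplanes, and scale it to integers. This is a strongly rigid magic square with positive entries.
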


Moreover, following~\cite{TaylorZwicker1995}, we can visualize the game $G_q$ from their theorem as a \emph{strongly rigid magic square} (a magic square where each row and column add up to the same number $p$, and no other subset $S$ of elements in the magic square adds up to $p$ unless $S$ is a row or a column). Indeed, the players may be arranged as cells $p_{ij},$ with $1\leq i,j\leq q,$ of a strongly rigid $q\times q$ magic square, in such a way that the rows
\[
  R_i=\{p_{i1},p_{i2},\dots,p_{iq}\},
  \qquad 1\leq i\leq q,
\]
are winning, the columns
\[
  C_j=\{p_{1j},p_{2j},\dots,p_{qj}\},
  \qquad 1\leq j\leq q,
\]
are losing, and
\[
  R_1,\dots,R_q
  \quad\longleftrightarrow\quad
  C_1,\dots,C_q
\]
is a $q$-trade.

For our purposes, the theorem is used only as a black box.  The important
consequence is (see~\cite[Theorem 3.3]{TaylorZwicker1995}):
\[
  G_q \text{ has no } j\text{-trades for }1\leq j\leq q-1,
\]
but it does have a $q$-trade, namely the row $\leftrightarrow$ column trade. 

\section{A self-dual selector}

Let $N$ be a finite set. A sequence $U_1,\dots,U_{2j}$ of subsets of $N$ is called \emph{$j$-balanced} if every element of $N$ belongs to exactly $j$ of the sets $U_1,\dots,U_{2j}$. Equivalently,
\[
\sum_{i=1}^{2j} \one_{U_i}=(j,j,\dots,j).
\]

Let $X$ be a finite set, and let $\mathcal A\subseteq 2^X$ be closed
under taking complements; that is, if $A\in\mathcal A$, then
$A^c\in\mathcal A$, where $A^c=X\setminus A$. A family
$\mathcal S\subseteq\mathcal A$ is called a \emph{self-dual selector} of
$\mathcal A$ if, for every $A\in\mathcal A$, exactly one of $A$ and
$A^c$ belongs to $\mathcal S$. Equivalently, $A\in\mathcal S \iff A^c\notin\mathcal S.$

In particular, since $\binom{[2n]}{n}$ is partitioned into complementary
pairs $\{\{A,A^c\}:A\in\binom{[2n]}{n}\}$, every self-dual selector
$\mathcal S\subseteq\binom{[2n]}{n}$ has cardinality $|\mathcal S|=\frac12\binom{2n}{n}.$

Let $G=(P,\cW)$ be any simple game, and let $\cL=2^P\setminus \cW$ be its set of losing coalitions.  Let us introduce one new point, denoted $\st$, and define the family $\cH\subseteq 2^{P\cup\{\st\}}$
by
\[
  \cH
  =
  \{X\cup\{\st\}:X\in \cW\}
  \cup
  \{P\setminus Y:Y\in \cL\}.
\]

More precisely, for every $U\subseteq P\cup\{\st\}$, exactly one of $U$ and its
complement belongs to $\cH$. Indeed, every complementary pair has the form
\[
  X\cup\{\st\}
  \quad\text{and}\quad
  P\setminus X
\]
for some $X\subseteq P$.  The first set belongs to $\cH$ exactly when
$X\in \cW$, and the second belongs to $\cH$ exactly when $X\in \cL$.
Since exactly one of these is true, $\cH$ chooses exactly one member from
each complementary pair.

The key point is that $j$-balanced sequences of $2j$ members of
$\mathcal H$ are exactly $j$-trades in $G$.

\begin{lemma}\label{lem:balanced-trade}
Let $G=(P,\cW)$ be a simple game, and let $\cH$ be the self-dual selector
constructed above.  If $\cH$ contains a $j$-balanced sequence of length $2j$, then
$G$ has a $j$-trade.  Conversely, every $j$-trade in $G$ gives a $j$-balanced
sequence of length $2j$ in $\cH$.
\end{lemma}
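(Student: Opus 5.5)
Let $U_1,\dots,U_{2j}$ be a $j$-balanced sequence of members of $\cH$, where the ground set is $N=P\cup\{\st\}$. The plan is to split this sequence according to whether or not a member contains $\st$, and to translate the balance condition at $\st$ and at the points of $P$ separately.

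First I use the coordinate $\st$. By construction, a member of $\cH$ contains $\st$ exactly when it has the form $X\cup\{\st\}$ with $X\in\cW$. The members of the other kind are $P\setminus Y$ with $Y\in\cL$, and these avoid $\st$. Since $\st$ lies in exactly $j$ of the $2j$ sets, exactly $j$ of them are of the first kind. Relabel so that these are $X_1\cup\{\st\},\dots,X_j\cup\{\st\}$ with $X_i\in\cW$, and the remaining $j$ are $P\setminus Y_1,\dots,P\setminus Y_j$ with $Y_i\in\cL$. Relabelling is harmless because balance depends only on the multiset of sets.

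Next I restrict the vector identity to the coordinates in $P$. Since $\one_{P\setminus Y}=\one_P-\one_Y$ on $P$, the condition $\sum_i \one_{U_i}=(j,\dots,j)$ becomes
\[
\sum_{i=1}^{j}\one_{X_i}+j\,\one_P-\sum_{i=1}^{j}\one_{Y_i}=j\,\one_P .
\]
This is equivalent to $\sum_i\one_{X_i}=\sum_i\one_{Y_i}$, which is precisely a $j$-trade from $X_1,\dots,X_j\in\cW$ to $Y_1,\dots,Y_j\in\cL$.

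For the converse, start from a $j$-trade $X_1,\dots,X_j\in\cW$, $Y_1,\dots,Y_j\in\cL$ and take the sequence $X_1\cup\{\st\},\dots,X_j\cup\{\st\},P\setminus Y_1,\dots,P\setminus Y_j$. All of these sets lie in $\cH$. The point $\st$ appears exactly $j$ times, and the computation above, read backwards, shows that every $p\in P$ also appears exactly $j$ times.

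There is no real obstacle here; the one point needing care is the $\st$-coordinate count, which forces the split into exactly $j$ sets of each kind. I would also remark that the sets in a trade need not be distinct, matching the paper's use of sequences rather than families.
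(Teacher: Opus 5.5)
Your proof is correct and follows the paper's argument. The count at $\ast$ forces exactly $j$ sets of each kind. Restricting to the coordinates in $P$ then turns balance into $\sum_i \one_{X_i}=\sum_i \one_{Y_i}$: you do this in vector form, the paper pointwise for each $p\in P$, and the converse is the same computation read backwards.
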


\begin{proof}
Suppose $U_1,\dots,U_{2j}\in \cH$ is $j$-balanced on $P\cup\{\st\}$.  Since the special point $\st$ must occur
exactly $j$ times, after reordering we may write
\[
  U_i=X_i\cup\{\st\},\qquad 1\leq i\leq j,
\]
where each $X_i$ is winning in $G$, and
\[
  U_{j+i}=P\setminus Y_i,
  \qquad 1\leq i\leq j,
\]
where each $Y_i$ is losing in $G$.

Now fix a player $p\in P$.  The $j$-balance condition gives that $p$ occurs exactly $j$ times
among the $2j$ sets $U_1,\dots,U_{2j}$.  Hence,
\[
  |\{i:p\in X_i\}|
  +
  |\{i:p\in P\setminus Y_i\}|
  =j.
\]
But
\[
  |\{i:p\in P\setminus Y_i\}|
  =
  j-|\{i:p\in Y_i\}|.
\]
Therefore,
\[
  |\{i:p\in X_i\}|=|\{i:p\in Y_i\}|.
\]
Since this holds for every $p\in P$,
\[
  \one_{X_1}+\cdots+\one_{X_j}
  =
  \one_{Y_1}+\cdots+\one_{Y_j}, 
\] and so $X_1,\dots,X_j
  \longleftrightarrow
  Y_1,\dots,Y_j$ is a $j$-trade in $G$.

Conversely, suppose we are given a $j$-trade
\[
  X_1,\dots,X_j
  \quad\longleftrightarrow\quad
  Y_1,\dots,Y_j,
\]
where the $X_i$ are winning and the $Y_i$ are losing.  Then the $2j$ sets
\[
  X_1\cup\{\st\},\dots,X_j\cup\{\st\},
  \qquad\text{ and }\qquad
  P\setminus Y_1,\dots,P\setminus Y_j
\]
belong to $\cH$.  The point $\st$ appears exactly $j$ times.  For a player
$p\in P$, if $p$ occurs $a_p$ times among the $X_i$, then by the trade
condition it also occurs $a_p$ times among the $Y_i$.  Hence $p$ occurs
\[
  a_p+(j-a_p)=j
\]
times among the displayed $2j$ members $X_1\cup\{\st\},\dots,X_j\cup\{\st\},
  P\setminus Y_1,\dots,P\setminus Y_j$ of $\cH$.  Therefore the sequence $X_1\cup\{\st\},\dots,X_j\cup\{\st\},
  P\setminus Y_1,\dots,P\setminus Y_j$ is $j$-balanced.
\end{proof}

\section{Proof of Conjecture~\ref{con:Moss}}

We now prove the main theorem. 

\begin{theorem}\label{thm:main}
Let $k\geq 2$.  For every $n\geq k(k+1)$, there exists a family $\cF\subseteq \binom{[2n]}{n}$ such that

(i) $\cF$ is a self-dual selector. In particular, $\cF$ has maximum possible size subject to containing no complementary pair, and so $|\cF|=\frac{1}{2}\binom{2n}{n}$. 

(ii) $\cF$ has no balanced subfamily of size $2,4,\dots,2k$.

(iii) $\cF$ has a balanced subfamily of size $2k+2$.

\end{theorem}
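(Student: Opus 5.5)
We take $q=k+1\geq 3$ and apply Theorem~\ref{thm:one} to get the game $G_q$ on a player set $P$ with $|P|=q^2$. This game has no $j$-trade for $1\le j\le k$, and its rows and columns form a $q$-trade. Lemma~\ref{lem:balanced-trade} then gives a self-dual selector $\cH$ on $P\cup\{\st\}$ whose $j$-balanced sequences match the $j$-trades of $G_q$. The sets in $\cH$ have different sizes, so the remaining work is to lift $\cH$ into the middle layer using padding points.

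\textbf{Construction.} Identify $[2n]$ with $P\cup\{\st\}\cup Z$, where $Z$ is a padding set of size $2n-q^2-1$. This size is nonnegative because $n\ge k(k+1)$ and $k\ge 2$. For $A\in\binom{[2n]}{n}$ let $\tau(A)=A\cap(P\cup\{\st\})$ be its trace, and set
\[
\cF=\Bigl\{A\in\tbinom{[2n]}{n}:\tau(A)\in\cH\Bigr\}.
\]
We have $\tau(A^c)=(P\cup\{\st\})\setminus\tau(A)$. Since $\cH$ contains exactly one set from each complementary pair in $2^{P\cup\{\st\}}$, exactly one of $A$ and $A^c$ lies in $\cF$. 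Hence $\cF$ is a self-dual selector, which proves (i).

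\textbf{Part (ii).} Suppose $A_1,\dots,A_{2j}\in\cF$ is balanced with $j\le k$. Then every point of $P\cup\{\st\}$ lies in exactly $j$ of the sets, so the traces $\tau(A_1),\dots,\tau(A_{2j})$ form a $j$-balanced sequence in $\cH$. By Lemma~\ref{lem:balanced-trade}, $G_q$ would then have a $j$-trade with $1\le j\le q-1$, which is impossible.

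\textbf{Part (iii).} Start from the $q$-trade $R_1,\dots,R_q\leftrightarrow C_1,\dots,C_q$. By Lemma~\ref{lem:balanced-trade} it gives the $q$-balanced sequence
\[
R_1\cup\{\st\},\dots,R_q\cup\{\st\},\quad P\setminus C_1,\dots,P\setminus C_q
\]
in $\cH$. The first $q$ sets have size $q+1$ and the last $q$ have size $q^2-q$. We extend each set by a subset of $Z$ so that it has size $n$ and every point of $Z$ is covered exactly $q$ times. This means finding a $0/1$ matrix with $2q$ rows and $|Z|$ columns whose row sums are $r_i=n-q-1$ for the first $q$ rows and $r_i=n-q^2+q$ for the last $q$ rows, and whose column sums are all $q$.

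\textbf{The obstacle and how to clear it.} This padding step is the only real difficulty, and it is where the hypothesis $n\ge k(k+1)=q(q-1)$ is used. First, all $r_i\ge 0$: the condition $n-q^2+q\ge 0$ is exactly $n\ge q(q-1)$. Second, $r_i\le |Z|$ for every $i$, which also reduces to $n\ge q^2-q$. Third, the totals agree:
\[
\sum_i r_i=q(2n-q^2-1)=q|Z|.
\]
Under these three conditions the matrix can be built by a wrap-around filling. Lay out $q|Z|$ consecutive slots cut into blocks of lengths $r_1,\dots,r_{2q}$. Put padding element number $z$ into the slots $z,\;z+|Z|,\;\dots,\;z+(q-1)|Z|$. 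Each element then appears exactly $q$ times. Because every block has length at most $|Z|$, no block receives the same element twice, so each padded set has exactly $n$ elements.

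The resulting $2q=2k+2$ sets are pairwise distinct, because their traces are distinct: rows are distinct, columns are distinct, and a trace containing $\st$ never equals one that does not. Their traces lie in $\cH$, so the sets lie in $\cF$. Every point of $P\cup\{\st\}$ and every point of $Z$ lies in exactly $q$ of them, so they form a balanced subfamily of $\cF$ of size $2k+2$.
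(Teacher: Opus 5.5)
Your proof is correct and follows the paper's argument essentially step for step: you set $q=k+1$, lift $\cH$ to $[2n]$ via traces on $P\cup\{\st\}$, restrict to the middle layer, and reduce (ii) to Lemma~\ref{lem:balanced-trade}. The only difference is in (iii). The paper pads explicitly, adding one fixed set $D_{\text{pad}}\cup E_{\text{pad}}$ of size $n-q-1$ to every $R_i\cup\{\st\}$ and one fixed set $F_{\text{pad}}$ of size $n-q^2+q$ to every $P\setminus C_j$. That is just one particular solution of the $0/1$ matrix problem you solve in general by wrap-around filling.
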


\begin{proof}
Set $q=k+1$. By Theorem~\ref{thm:one}, there is a simple game $G_q=(P,\cW)$ on $P=\{p_{ij}:1\leq i,j\leq q\}$ with no $j$-trades for $1\leq j\leq q-1$, but with the row/column $q$-trade.

Construct the self-dual selector $\cH\subseteq 2^{P\cup\{\st\}}$ from $G_q$ as in the previous section. We remark that the use of $\st$ is inspired by the \emph{constant sum extension} construction mentioned in~\cite{TaylorZwicker1995} (see also~\cite{Einy1985,VonNeumannMorgenstern1944}). It keeps track of the winning side of a trade. In other words, the winning row coalitions have the form $R_i\cup\{\st\}$ while the losing column coalitions $C_i$ are represented in $\mathcal H$ by their complements $P\setminus C_i$.

Since $n\geq k(k+1)=q(q-1)$, then $t=n-q(q-1)\geq 0$. We now add padding elements so that the total ground set has size $2n$.
Let $a=q^2-2q-1.$
Since $q\geq 3$, we have $a\geq 0$.  Introduce padding elements
\[
  d_1,\dots,d_a,
\]
and also padding elements
\[
  e_1,
  \dots,e_t,
  \qquad\text{ and }\qquad
  f_1,\dots,f_t.
\]
Let
\[
  N
  =
  P\cup\{\st\}
  \cup
  \{d_1,
  \dots,d_a\}
  \cup
  \{e_1,
  \dots,e_t\}
  \cup
  \{f_1,
  \dots,f_t\}.
\]
Then
\begin{align*}
  |N|
  &=q^2+1+a+2t \\
  &=q^2+1+(q^2-2q-1)+2t \\
  &=2q(q-1)+2t \\
  &=2n.
\end{align*}

Extend $\cH$ to a selector $\cH'\subseteq 2^N$ by ignoring the padding elements. So for $A\subseteq N$, put
\[
  A\in \cH'
  \quad\Longleftrightarrow\quad
  A\cap(P\cup\{\st\})\in \cH.
\]
Because $\cH$ chooses exactly one member from each complementary pair in
$P\cup\{\st\}$, the family $\cH'$ chooses exactly one member from each
complementary pair in $N$. Indeed, for every $A\subseteq N$,
\[
(N\setminus A)\cap(P\cup\{\ast\})
=
(P\cup\{\ast\})\setminus \bigl(A\cap(P\cup\{\ast\})\bigr).
\]
Thus the restrictions of $A$ and $N\setminus A$ to $P\cup\{\ast\}$ are
complementary, and $\mathcal H'$ is a self-dual selector.

Now define
\[
  \cF'=\cH'\cap \binom{N}{n}.
\]

Since $\cH'$ chooses exactly one set from each complementary pair
$\{A,N\setminus A\}$, and since taking complements preserves the middle layer
$\binom{N}{n}$, it follows that
\[
  |\cF'|=\frac{1}{2}\binom{2n}{n}.
\]
In particular, $\cF'   $ is as large as possible among families with no balanced
subfamily of size $2$.

Next, we show that $\cF'$ has no balanced subfamily of size $2j$ for
$1\leq j\leq k$.  Suppose, for contradiction, that
\[
  A_1,\dots,A_{2j}\in \cF'
\]
is balanced.  Then every point of $N$ occurs exactly $j$ times among these
sets.  In particular, after restricting to $P\cup\{\st\}$, the sequence
\[
  A_1\cap(P\cup\{\st\}),
  \dots,
  A_{2j}\cap(P\cup\{\st\})
\]
is a $j$-balanced sequence of members of $\cH$.  By Lemma
\ref{lem:balanced-trade}, this gives a $j$-trade in $G_q$.

But $1\leq j\leq k=q-1,$ and the Taylor--Zwicker's theorem says that $G_q$ has no $j$-trades in
this range.  This contradiction proves that $\cF'$ has no balanced subfamily
of sizes $2,4,\dots,2k.$

It remains to produce a balanced subfamily of size $2k+2=2q$. To this end, let
\[
  R_i=\{p_{i1},p_{i2},\dots,p_{iq}\}
\]
be the rows, and let
\[
  C_j=\{p_{1j},p_{2j},\dots,p_{qj}\}
\]
be the columns.  In $\cH$, the sets
\[
  R_1\cup\{\st\},\dots,R_q\cup\{\st\}
\]
belong to $\cH$, because the rows are winning in $G_q$.  Also the sets
\[
  P\setminus C_1,
  \dots,
  P\setminus C_q
\]
belong to $\cH$, because the columns are losing in $G_q$.

Let
\[
  D_{\text{pad}}=\{d_1,
  \dots,d_a\},
  \qquad
  E_{\text{pad}}=\{e_1,
  \dots,e_t\},
  \qquad
  F_{\text{pad}}=\{f_1,
  \dots,f_t\}.
\]
For $1\leq i\leq q$, define
\[
  A_i=R_i\cup\{\st\}\cup D_{\text{pad}}\cup E_{\text{pad}}.
\]
For $1\leq j\leq q$, define
\[
  B_j=(P\setminus C_j)\cup F_{\text{pad}}.
\]
Each $A_i$ and each $B_j$ belongs to $\cH'$.  We check that they lie in the
middle layer.  Since $|R_i|=q$, we have
\begin{align*}
  |A_i|
  &=q+1+a+t \\
  &=q+1+(q^2-2q-1)+t \\
  &=q^2-q+t \\
  &=q(q-1)+t \\
  &=n.
\end{align*}
Also,
\[
  |B_j|=|P\setminus C_j|+t=(q^2-q)+t=n.
\]
Thus $A_1, \dots,A_q, B_1, \dots,B_q$ are $2q$ members of $\cF'$, and they are balanced. Indeed, the special point $\st$ occurs in exactly the $q$ sets $A_1,\dots,A_q$.  Each cell $p_{ij}$ occurs once in the row set $A_i$, and occurs in every column-complement set $B_\ell$ except $B_j$. Hence $p_{ij}$ occurs $1+(q-1)=q$ times.  Each padding element $d_r$ occurs in all $q$ sets $A_i$.  Each padding element $e_s$ occurs in all $q$ sets $A_i$.  Each padding element $f_s$ occurs in all $q$ sets $B_j$.  Therefore every point of $N$ occurs exactly $q$ times among the displayed $2q$ sets. So $\cF'$ contains a balanced subfamily of size $2q=2(k+1)=2k+2.$ 

To finish the construction, choose a bijection $\phi:N\to [2n]$. Notice that $\phi$ preserves cardinalities, set complements, and the balance condition. Thus, the family $\cF:=\phi(\cF')=\{\phi(A):A\in\cF'\}$ has all the required properties. This proves the theorem.
\end{proof}

\begin{example} We illustrate the construction in a small case, producing a family
with no balanced subfamily of size $2$, but with a balanced subfamily of
size $4$. This is a $2\times 2$ toy version of the Taylor--Zwicker magic-square
construction from~\cite{TaylorZwicker1995}.

Consider the $2\times 2$ magic square
\[
\begin{matrix}
1 & 2\\
2 & 1
\end{matrix}.
\]
Let $P=\{p_{11},p_{12},p_{21},p_{22}\}$ be the set of four players. Here, the rows are $R_1=\{p_{11},p_{12}\}$, and $R_2=\{p_{21},p_{22}\}$; and the columns are $C_1=\{p_{11},p_{21}\}$, and $C_2=\{p_{12},p_{22}\}.$

Assign rough weights
\[
\operatorname{rw}(p_{11})=1,\qquad
\operatorname{rw}(p_{12})=2,\qquad
\operatorname{rw}(p_{21})=2,\qquad\text{ and }\qquad
\operatorname{rw}(p_{22})=1.
\]
For a coalition $X\subseteq P$, write
\[
\operatorname{rw}(X)=\sum_{p\in X}\operatorname{rw}(p).
\]
Define a game $G$ on $P$ as follows: a coalition $X\subseteq P$ is winning
if either
\[
\operatorname{rw}(X)>3,
\]
or
\[
\operatorname{rw}(X)=3
\quad\text{and}\quad
X\text{ is one of the rows }R_1,R_2.
\]

Thus the two rows $R_1,R_2$ are winning, while the two columns $C_1,C_2$ are losing.  Moreover, $R_1,R_2\longleftrightarrow C_1,C_2$ is a $2$-trade, since each cell appears exactly once on each side.

Now, add a new point $\st$ and form the self-dual extension $\mathcal H$
on $M=P\cup\{\st\}$. Thus a coalition $U\subseteq M$ is declared winning in $\mathcal H$ if
either
\[
\st\in U
\quad\text{and}\quad
U\setminus\{\st\}\text{ is winning in }G,
\]
or
\[
\st\notin U
\quad\text{and}\quad
P\setminus U\text{ is losing in }G.
\]

The $2$-trade above gives four winning coalitions of $\mathcal H$:
\[
R_1\cup\{\st\},\qquad
R_2\cup\{\st\},\qquad
P\setminus C_1,\qquad\text{ and }\qquad
P\setminus C_2.
\]
Explicitly, these are
\[
\{p_{11},p_{12},\st\},\qquad
\{p_{21},p_{22},\st\},\qquad
\{p_{12},p_{22}\},\qquad\text{ and }\qquad
\{p_{11},p_{21}\}.
\]
These sets are $2$-balanced on $M$ since each of the four cells appears twice, and
$\st$ appears twice.

The last two sets have size $2$, while the first two have size $3$, so we
add one padding element $f$.  Let $N=M\cup\{f\}$. Then $|N|=6$ and thus the middle layer is $\binom{N}{3}$.

Let us extend $\mathcal H$ to a self-dual family $\mathcal H'\subseteq 2^N$ by
ignoring the padding element:
\[
A\in \mathcal H'
\quad\Longleftrightarrow\quad
A\cap M\in \mathcal H.
\]
Now define
\[
F'=\{A\in \binom{N}{3}:A\in\mathcal H'\}.
\]
Since $\mathcal H'$ is self-dual, $F'$ chooses exactly one member from each
complementary pair in $\binom{N}{3}$.  Therefore
\[
|F'|=\frac{1}{2}\binom{6}{3}=10,
\]
and $F'$ contains no balanced subfamily of size $2$.

The four coalitions above now become the following four members of
$\binom{N}{3}$:
\[
\{p_{11},p_{12},\st\},\qquad
\{p_{21},p_{22},\st\},\qquad
\{p_{12},p_{22},f\},\qquad\text{ and }\qquad
\{p_{11},p_{21},f\}.
\]
These four sets all lie in $F'$, and they are balanced: each element of
$N$ appears exactly twice.

Under the relabeling
\[
p_{11}=1,\qquad p_{12}=2,\qquad p_{21}=3,\qquad p_{22}=4,
\qquad \st=5,\qquad\text{ and }\qquad f=6,
\]
the four balanced sets are
\[
\{1,2,5\},\qquad
\{3,4,5\},\qquad
\{2,4,6\},\qquad\text{ and }\qquad
\{1,3,6\}.
\]
Thus
\[
\mathcal B=
\bigl\{
\{1,2,5\},
\{3,4,5\},
\{2,4,6\},
\{1,3,6\}
\bigr\}
\subseteq \binom{[6]}{3}.
\]
Each element of $[6]$ appears in exactly two members of $\mathcal B$,
so $\mathcal B$ is balanced. The desired family $F$ of size 10 containing $\mathcal{B}$ is obtained by relabeling $F'$:
\[
F=\bigl\{
\{1,2,3\},
\{1,2,4\},
\{1,3,4\},
\{2,3,4\},
\{1,2,5\},
\{2,3,5\},
\{3,4,5\},
\{1,3,6\},
\{2,3,6\},
\{2,4,6\}
\bigr\}.
\]

\end{example}

\begin{remark}[On the bound $n\geqq k(k+1)$]
The bound $n\geq k(k+1)$ in Theorem~2 is not optimized.  It was chosen
because it gives a transparent padding argument.  A more careful version of the same construction, using an element-switching operation before padding, gives
a substantially better bound. Consider the following \emph{element-switching} operator: Fix a set
$D\subseteq P$, and replace every set $U\subseteq P\cup\{\st\}$ by its symmetric
difference $U\triangle D$. Thus, for each element of $D$, membership is toggled: if the element
belongs to $U$ it is removed, and if it does not belong to $U$ it is added.

Let $q=k+1$.  In the proof of Theorem~\ref{thm:main}, before padding, the relevant
$2q$ members of the self-dual selector $\mathcal H$ are
\[
R_i\cup\{\ast\},\qquad 1\leq i\leq q,
\]
and
\[
P\setminus C_j,\qquad 1\leq j\leq q.
\]
These have sizes $q+1$ and $q^2-q$, respectively.  The proof above pads
the smaller sets up to size $q^2-q$, which is why it gives the bound
$n\geq q(q-1)=k(k+1)$. One can improve this bound by first applying the following element switch.  If
$D\subseteq P$, replace $\mathcal H$ by
\[
\mathcal H^D=\{U\triangle D:U\in\mathcal H\}.
\] 
This operation preserves self-duality since $
(P\cup\{\ast\})\setminus (U\triangle D)
=
\bigl((P\cup\{\ast\})\setminus U\bigr)\triangle D.
$ It also preserves $j$-balanced sequences of length $2j$: toggling an
element changes its number of appearances from $j$ to $2j-j=j$.
Therefore the argument excluding balanced subfamilies of sizes
$2,4,\ldots,2k$ is unchanged.

Write
\[
r_i=|D\cap R_i|,\qquad c_j=|D\cap C_j|,\qquad \text{ and }\qquad d=|D|.
\]
Then
\[
\bigl|(R_i\cup\{\ast\})\triangle D\bigr|
=
q+1+d-2r_i,
\]
and
\[
\bigl|(P\setminus C_j)\triangle D\bigr|
=
q^2-q-d+2c_j.
\]
Thus the goal is to choose $D$ so that it contains nearly half of each
row and nearly half of each column. This gives the following improved threshold.  Let
\[
n_0(q)=
\begin{cases}
5, & q=3,\\[4pt]
\dfrac{q^2}{2}+1, & q\text{ even},\\[8pt]
\dfrac{q^2+3}{2}, & q\text{ odd and }q\geq 5.
\end{cases}
\]
Then the same method gives the conclusion of Theorem~2 for all $n\geq n_0(k+1)$.

Indeed, if $q$ is even, choose $D$ with exactly $q/2$ cells in every row
and every column, for instance a union of $q/2$ cyclic diagonals of the
$q\times q$ array.  Then the switched row-type sets have size
$q^2/2+1$, and the switched column-complement sets have size $q^2/2$.
Adding one padding element to all column-complement sets places the
balanced $2q$-tuple in the middle layer with
\[
n=\frac{q^2}{2}+1.
\]

If $q=3$, taking $D$ to be the diagonal of the $3\times3$ array makes all
six switched sets have size $5$, giving $n_0(3)=5$.

Finally, if $q=2h+1\geq 5$ is odd, take $D$ to be $h$ cyclic diagonals
off the main diagonal, together with $h$ additional diagonal cells.  Then
the switched sets have sizes either
\[
\frac{q^2+3}{2}
\]
or
\[
\frac{q^2+3}{2}-2.
\]
Exactly $q$ of the $2q$ switched sets have the smaller size.  Adding two
padding elements to precisely those $q$ deficient sets makes all $2q$
sets have size $(q^2+3)/2$, and each of the two new padding elements
appears exactly $q$ times.  Thus the balance condition is preserved.

For larger $n$, one adds pairs of padding elements as in the proof of
Theorem~\ref{thm:main}.
\end{remark}

Finally, while the proof covers the case $k\geq2$, the Theorem~\ref{thm:main} also holds if $k=1$. The argument is significantly simpler in this case. We address this in the next remark. 

\begin{remark}[The case $k=1$]\label{rem:k=1}
The case $k=1$ does not require Taylor--Zwicker.  For example, for $n=3$ on
six points, the four sets
\[
  \{1,2,3\},\quad
  \{1,4,5\},\quad
  \{2,4,6\},\quad\text{ and }\qquad
  \{3,5,6\}
\]
are balanced, since every point appears twice.  No two of them are
complements of each other.  Thus one may choose a complement-free family
$\cF\subseteq\binom{[6]}{3}$ of size $\frac{1}{2}\binom{6}{3}$ containing these
four sets. For larger $n$, add $n-3$ pairs of new points.  In each new pair, place one
point into the first two sets and the other point into the last two sets.
This preserves the balance condition and increases each of the four set sizes by
$n-3$.  Again, no complementary pair is forced, so one can extend these four
sets to a maximum complement-free family.
\end{remark}

Then, combining Theorem~\ref{thm:main} and Remark~\ref{rem:k=1}, the proof of Conjecture~\ref{con:Moss} is now complete. 

\section*{Acknowledgments} The author thanks Lawrence S. Moss for bringing the question to his attention, for sharing the unpublished manuscript~\cite{MossPedersen}, and for comments provided on earlier drafts of this note. All errors that remain are the author's.

\end{document}